   \newtheorem{theorem}{Theorem}[section]
   \newtheorem{example}[theorem]{Example}
\journal{Applied Mathematics and Computations}
\begin{document}

\begin{frontmatter}

%% Title, authors and addresses

%% use the tnoteref command within \title for footnotes;
%% use the tnotetext command for the associated footnote;
%% use the fnref command within \author or \address for footnotes;
%% use the fntext command for the associated footnote;
%% use the corref command within \author for corresponding author footnotes;
%% use the cortext command for the associated footnote;
%% use the ead command for the email address,
%% and the form \ead[url] for the home page:
%%
%% \title{Title\tnoteref{label1}}
%% \tnotetext[label1]{}
%% \author{Name\corref{cor1}\fnref{label2}}
%% \ead{email address}
%% \ead[url]{home page}
%% \fntext[label2]{}
%% \cortext[cor1]{}
%% \address{Address\fnref{label3}}
%% \fntext[label3]{}

\title{PDE-Based Optimization for Advection Diffusion Equation in 2D Domain}
\author{Yunfei Song}

\address{Quant Strategies Group, BA Securities\\
\large\emph{February 10, 2021}}

\begin{abstract}
%% Text of abstract
In this paper,  we propose a PDE-based optimization motivated by the problem of microfluidic heat transfer to finding the optimal incompressible velocity fields in 2D domain. To solve this optimization model, we use spectral method to discretize the model to obtain an ODE based optimization.  This way significantly reduces the complexity of the discretization optimization model, and gives a more accurate approximation of the original PDE based optimization. Some theoretical results are obtained. 

\end{abstract}

\begin{keyword}
%% keywords here, in the form: keyword \sep keyword

%% MSC codes here, in the form: \MSC code \sep code
%% or \MSC[2008] code \sep code (2000 is the default)
Advection Diffusion Equation, Spectral Method,  PDE-Based Optimization, Invariant Set
\end{keyword}

\end{frontmatter}

\section{Introduction}
The \emph{advection-diffusion equation (ADE)} is given as follows:
\begin{equation}\label{ade}
\frac{\partial \phi}{\partial t}+\textbf{v}\cdot\nabla \phi= \kappa \nabla^2\phi,
\end{equation}
where $\textbf{v}$ is the velocity vector, $\kappa=\tfrac{1}{Pe}$ and $Pe$ is the Peclet number. 
The Peclet number is the ratio of the characteristic time for diffusion over the characteristic time for advectoin, and defined as $Pe=\frac{VL}{D}$ where $V$ is a characteristic lengthscale, $V$ is a characteristic velocity intensity, and $D$ is the diffusivity, respectively. The Peclet Number is a dimensionless parameter that indicates the relative importance of
advection and diffusion to the transport of scalars in a given system.  ADE is also referred to as convection-diffusion equation. ADE describes the motion of the combination of an advection process and a diffusion process. The second term on the left hand side in (\ref{ade}) is the advection (convection) term, the right hand side in (\ref{ade}) is the diffusion term.  If we add a reaction term $\phi$ on the right hand side in (\ref{ade}), then it is called advection-diffusion-reaction equation. In this paper, we focus on ADE in 2-dimensional (2-D) space.

A review paper on advection-diffusion equations in chaotic flows is \cite{cort1}. 
A spectral method is proposed to numerically solve advection-diffusion-reaction equations for reactive mixing problem with laminar chaotic flow in \cite{adrover1}. 
The exponent characterizing the decay towards the equilibrium distribution of a generic diffusing scalar advected by a nonlinear flow is studied in \cite{cerbelli1}.
The dynamics of a single irreversible reaction occuring in a bounded incompressible flow is analyzed in \cite{cerbelli2}.
Spectral methods used to solve partial differential equations are given in \cite{gottlieb}.The implementations of spectral methods to solve partial differential equations by using MATLAB are studied in \cite{trefethen}. The theoretical methods used in this paper use some results of partial differential equations found in \cite{evans}. 

PDE constrained optimization problem is a combination of differential equation and general optimization. The PDE part can be appeared in the objective function part, constraints part or both. There are usually two ways to solve a PDE based optimization, discretization-then-optimization or optimization-then-discretization. In the discretization-then-optimization method, one will discretize the differential equation by certain numerical discretization methods into a general large scale traditional optimization problem, then solving the optimization problem by general optimization algorithms, e.g., interior point method, e.g., \cite{ipopt},  to solve the  so called optimality conditions. Efficient algorithms for large-scale nonlinear optimization problems are refereed to as \cite{curtis1, curtis2, curtis3}.  In the optimization-then-discretization method, one will derive the so called optimality condition of the PDE based optimization, i.e., the sufficient and necessary condition such that the solution of the condition is the optimal solution of the PDE based optimization. Then one will use numerical discretization method to solve the optimality condition which involves some differential equations forms.  More details can be refereed to  \cite{biegler}, \cite{hinze}. Optimal transport of diffusive scalar from the boundary is studied in \cite{grover}, where uses PDE based optimization to solve heat transfer problem with the advection diffusion equation. 
Gibbs phenomena is a common oscillatory behavior when using spectral method for certain differential equations, e.g., \cite{songjung}. 

When the PDE is discretized by numerical methods, e.g., discretization methods or spectral methods, the PDE based optimization will become to an ODE based optimization, which is also named dynamical system based optimization. A key problem in this area is to find the invariant set that the dynamical system is never escaped, i.e., controlled by the set. Some condition for certain system is derived in \cite{song4, song1, songthesis, songnagumo}. Invariance preserving for using certain discretization methods are given in \cite{song3, song2, songthesis}.  By constructing certain invariant set by using Dikin ellipsoid is refereed to \cite{songdikin}.

In this paper, motivated by the problem of microfluidic heat transfer, we propose a PDE-based optimization to identify the optimal incompressible velocity fields in 2D domain. We applied the discretization-then-optimization method. In particular, we use spectral method to discretize the model to obtain an ODE based optimization to solve this optimization model.  The novelty of this paper is that this way significantly reduces the complexity of the discretization optimization model, and gives a more accurate approximation of the original PDE based optimization. Some theoretical results are obtained about the boundness of the model coefficients. 

\section{Basic Background}
Assume the domain of (\ref{ade}) is $\Omega\times[0,\infty), \Omega\in \mathbb{R}^2$. To make (\ref{ade}) to be solvable, we need to add the boundary and initial conditions. The boundary condition of (\ref{ade}) can be given either as Dirichlet (or first-type) boundary condition
\begin{equation}
\phi(x,t)=f(t) \text{ for all } x\in \partial \Omega,
\end{equation}
or Neumann (or second-type) boundary condition 
\begin{equation}\label{newman}
\frac{\partial \phi(x)}{\partial n}\Big|_{\partial \Omega}=\nabla \phi(x)\cdot \textbf{n}|_{\partial\Omega}=0.
\end{equation}

The initial condition is given as follows:
\begin{equation}
\phi(x,0)=g(x) \text{ for all } x\in \Omega.
\end{equation}

The velocity vector $\textbf{v}$ of the flow in 2-D  is generally obtained by  the following function $\psi(x,y)$ 
\begin{equation*}\label{stream}
  v_1(x,y)=-\frac{\partial \psi(x,y)}{\partial y}, ~~v_2(x,y)=\frac{\partial \psi(x,y)}{\partial x}.
\end{equation*}
The function $\psi(x,y)$ is called stream function, e.g., \cite{cort1}. 
Under certain conditions, the stream function $\psi(x,y)$ satisfies the biharmonic equation, i.e., $\nabla^4\phi=0$.

We now present  a well-known properties of ADE.
\begin{theorem}\label{thm1}
Assume ADE is given as (\ref{ade}) and the Neumann boundary condition is given as (\ref{newman}), then 
\begin{equation}
  \frac{d\|\phi\|_{L^2}^2}{dt}=-2\kappa \|\nabla\phi\|_{L^2}^2
\end{equation}
\end{theorem}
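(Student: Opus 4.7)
The plan is to differentiate $\|\phi\|_{L^2}^2 = \int_\Omega \phi^2\,dx$ with respect to time, pass the derivative inside the integral, and substitute the ADE (\ref{ade}) to express everything in terms of spatial operators acting on $\phi$. This yields
\[
\frac{d\|\phi\|_{L^2}^2}{dt} = 2\int_\Omega \phi\,\frac{\partial \phi}{\partial t}\,dx = -2\int_\Omega \phi\,(\mathbf{v}\cdot\nabla\phi)\,dx + 2\kappa \int_\Omega \phi\,\nabla^2\phi\,dx,
\]
so the theorem reduces to showing the advection integral vanishes and the diffusion integral equals $-\|\nabla\phi\|_{L^2}^2$.

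For the diffusion term I would apply Green's first identity (integration by parts), which gives
\[
\int_\Omega \phi\,\nabla^2\phi\,dx = \int_{\partial\Omega} \phi\,\frac{\partial \phi}{\partial n}\,dS - \int_\Omega |\nabla\phi|^2\,dx.
\]
The boundary integral vanishes immediately by the Neumann condition (\ref{newman}), producing the desired $-2\kappa\|\nabla\phi\|_{L^2}^2$ contribution.

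For the advection term, I would use the identity $\phi\,\mathbf{v}\cdot\nabla\phi = \tfrac{1}{2}\mathbf{v}\cdot\nabla(\phi^2) = \tfrac{1}{2}\nabla\cdot(\phi^2\mathbf{v}) - \tfrac{1}{2}\phi^2(\nabla\cdot\mathbf{v})$. Because $\mathbf{v}$ is derived from the stream function $\psi$ as in the excerpt, one computes $\nabla\cdot \mathbf{v} = -\partial_x\partial_y\psi + \partial_y\partial_x\psi = 0$, so the flow is incompressible. Applying the divergence theorem to the remaining term yields
\[
\int_\Omega \phi\,(\mathbf{v}\cdot\nabla\phi)\,dx = \frac{1}{2}\int_{\partial\Omega}\phi^2\,\mathbf{v}\cdot\mathbf{n}\,dS,
\]
which vanishes provided the flow does not penetrate the boundary, i.e.\ $\mathbf{v}\cdot\mathbf{n}=0$ on $\partial\Omega$.

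The main subtlety is this last point: unlike the diffusion term, the Neumann condition (\ref{newman}) alone does not kill the advective boundary integral; one additionally needs $\mathbf{v}\cdot\mathbf{n}=0$ on $\partial\Omega$. I would therefore either invoke this as a standing assumption on the admissible velocity fields (which is natural in the microfluidic setting, where the stream function is taken constant on $\partial\Omega$, forcing the tangential gradient and hence $\mathbf{v}\cdot\mathbf{n}$ to vanish), or state it explicitly as a hypothesis of the theorem. Once that is justified, combining the two pieces yields the claimed identity.
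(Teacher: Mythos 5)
Your proof follows essentially the same route as the paper: differentiate $\|\phi\|_{L^2}^2$, substitute the ADE, handle the advection term via $\phi\,\mathbf{v}\cdot\nabla\phi=\tfrac12\nabla\cdot(\phi^2\mathbf{v})$ with incompressibility and the divergence theorem, and handle the diffusion term via Green's identity with the Neumann condition killing the boundary term. Your explicit observation that the advective boundary integral needs $\mathbf{v}\cdot\mathbf{n}=0$ on $\partial\Omega$ (not supplied by the Neumann condition on $\phi$) is a correct and welcome refinement of a point the paper passes over silently, relying implicitly on the no-slip condition imposed later in (\ref{noslip}).
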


\begin{proof}
By multiplying $\phi$ by both sides of (\ref{ade}) and take the integration over $\Omega$, we have
\begin{equation}\label{xmys}
  \int_{\Omega}\phi\frac{\partial \phi}{\partial t}d\textbf{x}+\int_{\Omega}\phi \textbf{v}\cdot \nabla \phi d\textbf{x}=\kappa \int_{\Omega}\phi\nabla^2\phi d\textbf{x}.
\end{equation}
For the first term in the left side of (\ref{xmys}), we have 
\begin{equation}\label{eqms}
  \int_{\Omega}\phi\frac{\partial \phi}{\partial t}d\textbf{x}=\frac{1}{2}\int_{\Omega}\frac{\partial \phi^2}{\partial t}d\textbf{x}=\frac{1}{2}\frac{d}{dt}\int_{\Omega}\phi^2d\textbf{x}=\frac{1}{2}\frac{d\|\phi\|^2_{L^2}}{dt}.
\end{equation}
For the second term in the left side of  (\ref{xmys}), by using the divergence theorem, e.g., \cite{evans}, and noting that the flow $\textbf{v}$ is incompressible, we have
\begin{equation}\label{eqms22}
\begin{split}
  \int_{\Omega}\phi \textbf{v}\cdot \nabla \phi d\textbf{x}&=\int_{\Omega}\textbf{v}\cdot\phi  \nabla \phi d\textbf{x}=\frac{1}{2}\int_{\Omega}\textbf{v}\cdot\nabla \phi^2 d\textbf{x}=
\frac{1}{2}\int_{\Omega}\nabla\cdot\textbf{v} \phi^2 d\textbf{x}\\
&=\frac{1}{2}\int_{\partial\Omega}\phi^2\textbf{v}\cdot\textbf{n}d\sigma=0.
\end{split}
\end{equation}
For the right side of (\ref{xmys}), we have 
\begin{equation}\label{eqms33}
\begin{split}
  \int_{\Omega}\phi\nabla^2\phi d\textbf{x}&=\int_{\Omega}\nabla\cdot(\phi\nabla\phi)d\textbf{x}-
\int_{\Omega}\nabla\phi\cdot\nabla\phi d\textbf{x}\\
&=\int_{\partial \Omega}\phi\nabla\phi\cdot\textbf{n}d\textbf{x}-
\int_{\Omega}\nabla\phi\cdot\nabla\phi d\textbf{x}=-\int_{\Omega}\nabla\phi\cdot\nabla \phi d\textbf{x}=-\|\nabla\phi\|_{L^2}^2.
\end{split}
\end{equation}
Substituting (\ref{eqms}), (\ref{eqms22}) and (\ref{eqms33}) into (\ref{xmys}), the theorem is immediate. 
\end{proof}
Theorem \ref{thm1} indicates that the magnitude of flux $\phi$ decays as time goes forward when $\kappa$ is positive.  Also, if $\|\nabla\phi\|_{L^2}$ has a lower bound, then $\phi$ approaches to 0 as time goes to infinity. 

Spectral methods are powerful methods used for numerically finding the solution of ordinary or partial differential equations. Unlike the scheme based numerical methods, e.g., finite difference methods, spectral methods are global methods, where the computation at any given point depends not only on information at neighboring points, but on information from the entire domain.
The key idea of \emph{Spectral Galerkin methods} is the fact that a smooth function $\phi(x,t)$ can be represented in the form of
\begin{equation*}
\phi(x,t)=\sum_{n=0}^\infty\hat a_n(t)\psi_n(x),
\end{equation*}
where $\{\psi_n(x)\}$ are the global basis functions. We truncate $\phi(x,t)$ to the first $N+1$ terms as 
\begin{equation}\label{eqd1}
\phi_N(x,t)=\sum_{n=0}^N\hat a_n(t)\psi_n(x),
\end{equation}
and we have $\phi_N(x,t)\rightarrow \phi(x,t)$ as $N\rightarrow\infty,$ i.e., finding the approximation so that the residual is orthogonal to the space from which $\phi_N(x,t)$ comes. 
By substituting (\ref{eqd1}) into (\ref{ade}), we have 
\begin{equation*}\label{dcd2d}
R_N(x,t) = \sum_{n=0}^N\frac{\partial \hat a_n(t)}{\partial t}\psi_n(x)+\sum_{i=0}^N\hat a_n(t)\textbf{v}\cdot\nabla\psi_n(x)-\kappa\sum_{i=0}^N\hat a_n(t)\nabla^2\psi_n(x).
\end{equation*}
Applying the test function $\psi_i(x)$ to (\ref{dcd2d}) under the weight function $w(x)$ and taking the integration, we have 
\begin{equation*}\label{dcc2d}
\int_{\Omega}R_N(x,t) \psi_i(x) w(x)dx \text{  for all } i \in \{1,2,...,N\}.
\end{equation*}

\section{Build PDE based Optimization Model}
Note that in the following discussion, if the exact norm is not explicitly given, then  we are using $L_2$ norm in our final optimization problem. This is since $L_2$ norm  will yield quadratic form when the discretization method is used to transform the PDE based optimization problem into standard optimization problem, which is usually easy to solve by current optimization solvers. 

First, we consider the objective function used in our problem. The following cost functions can be considered to achieve the purpose:

\begin{equation}
  \min \|\phi(x,y,t)\|_{L_1}
\end{equation}

\begin{equation}
  \min \|\phi(x,y,t)\|_{L_2}
\end{equation}

\begin{equation}
  \min  t, \text{ s.t. } \|\phi(x,y,t)\|_{L_1}=0.99\|\phi(x,y,0)\|
\end{equation}
But in our research, we use the following objective function:
\begin{equation}
  \min_t Q(t)=\|\nabla\phi(x,y,t)\|_{L_2}^2
\end{equation}
where 
\begin{equation}
\|\nabla\phi\|_{L_2}^2=\int_0^1\int_0^1\nabla\phi\cdot\nabla\phi dxdy.
\end{equation}
 This metric is directly the measure of variance in the domain. Minimizing this at a certain time will require the history of all controls up to that time.
An alternative measure as cost function is given as follows:

\begin{equation}
  \max P(t)=\frac{dQ(t)}{dt}
\end{equation}
To maximize this implies we are choosing a velocity field which maximizes the instantaneous decay of the variance of the scalar. Theoretically, this optimal velocity field does not depend on the history.

We now consider the involved constraints in our problem. The first constraint is obviously the advection-diffusion equation:
\begin{equation}\label{adeopt}
\frac{\partial \phi}{\partial t}+\textbf{v}\cdot\nabla \phi= \kappa \nabla^2\phi.  
\end{equation}
The boundary condition of the ADE (\ref{adeopt}) is Dirichlet type rather than Newmann type, since we need the boundary of the region has fixed tempereture as input flux. 
The velociy flow in (\ref{adeopt}) is now variables rather than given functions. From the physical perspective, we also assume that the velocity flow is incompressible:
\begin{equation}
  \nabla\cdot\textbf{v}=\frac{\partial v_1}{\partial x}+\frac{\partial v_2}{\partial y}=0.
\end{equation}
The velocity flow is required to normalize in terms of either one of the following two manners:
\begin{equation}\label{vflowcase}
  \|\textbf{v}\|_{L_2}^2=1~~
\text{ or }~~
  \|\nabla \textbf{v}\|_{L_2}^2=1.
\end{equation}
We also need the velocity flow vanishes, i.e., no-slip, on the boundary of the region.
\begin{equation}\label{noslip}
  \textbf{v}|_\Omega=0.
\end{equation}

According to the discussion above, the overal PDE based optimization problem can be summarized as follows:
\begin{equation}\label{pdeopt}
  \begin{array}{rrl}
    \min &\|\nabla\phi\|_{L_2}^2&\\
\text{ s.t.} & \frac{\partial \phi}{\partial t}+\textbf{v}\cdot\nabla \phi&= \kappa \nabla^2\phi \\ 
& \frac{\partial v_1}{\partial x}+\frac{\partial v_2}{\partial y}&=0\\
& \|v\|_{L_2}^2&=1\\
& \textbf{v}(x,y)&=0~~~\text{ for } (x,y)\in \partial [0,1]\times \partial [0,1]\\
& \phi(x,y,t)&=C~~\text{ for } (x,y)\in \partial [0,1]\times \partial [0,1]\\
  \end{array}
\end{equation}

\section{Spectral Method to PDE}
For the computational reasons, the velocity flow $v$ is usually defined as in terms of the finite truncated basis functions 

\begin{equation}\label{expand3d}
  v_1(x,y)=\sum_{k,\ell=0}^M\alpha_{k\ell}(t)\hat v_{k\ell}(x,y)~\text{ and }~ v_2(x,y)=\sum_{k,\ell=0}^M\beta_{k\ell}(t)\hat u_{k\ell}(x,y),
\end{equation}
where $\hat v_{k\ell}(x,y)$ and $\hat u_{k\ell}(x,y)$ are some basis functions, e.g., 
\begin{equation}\label{bafun1}
  \hat v_{k\ell}(x,y)=\sin(k\pi x)\cos(\ell\pi y) \text{ and } \hat u_{k\ell}(x,y)=\cos(k\pi x)\sin(\ell\pi y).
\end{equation}
Then we will see that the coefficients, $\alpha_{k\ell}, \beta_{k\ell},$ shown in (\ref{expand3d}) are the introduced variables in our discretization standard optimization problem. If the basis function is given as in (\ref{bafun1}), then, according to the stream function relationship (\ref{stream}), we need to add the following conditions as constraints to our optimization problem:
\begin{equation}\label{addequ2}
  k\alpha_{k\ell}(t)+\ell\beta_{k\ell}(t)=0, \text{ for all } k,\ell=0,1,...,M.
\end{equation}

 We also note that if the basis functions of the spectral methods used to solve the PDE are well chosen, then the boundary condition of the PDE is automatically satisfied, which yields that it can be eliminated in our optimization problems.  
Since we are using Dirichlet boundary condition and the region of the PDE is $[0,1]\times [0,1]$, we choose $\{\sin(i\pi x)\sin(j\pi y)\}_{i,j=0}^\infty$ as the basis functions used in the spectral method. Since the sine function equals to 0 for $i,j=0$, we obtain that  $\phi(x,y,t)$ can be represented as follows:
\begin{equation}\label{pdespec}
  \phi(x,y,t)=\sum_{i,j=1}^Na_{ij}(t)\sin(i\pi x)\sin(j\pi y),
\end{equation}
where $N$ denotes the number of modes of the spectral method. 
Applying (\ref{pdespec}) to the gradient of $\phi$, we have
\begin{equation}
  \nabla \phi(x,y,t)=\left(\sum_{i,j=1}^N a_{ij}(t)i\pi\cos(i\pi x)\sin(j\pi y),\sum_{i,j=1}^N a_{ij}(t)j\pi\sin(i\pi x)\cos(j\pi y)\right)^T.
\end{equation}
Then the objective function in the optimization problem (\ref{pdeopt}) yields 
\begin{equation}\label{objsimple}
\begin{array}{rl}
  \|\nabla\phi\|_{L^2}^2&=\int_{0}^1\int_0^1\nabla\phi\cdot\nabla \phi dxdy\\
&=\pi^2\sum\limits_{i,j=1}^Na_{ij}^2(t)\int_{0}^1\int_0^1(i^2\cos^2(i\pi x)\sin^2(j\pi y)+j^2\sin^2(i\pi x)\cos^2(j\pi y))dxdy\\
%&=\pi^2\sum\limits_{i,j=0}^Na_{ij}^2(t)(i^2\sigma_i\tau_j+j^2\tau_i\sigma_j)\\
&=\frac{\pi^2}{4}\sum\limits_{i,j=1}^Na_{ij}^2(t)(i^2+j^2).
\end{array}
\end{equation}

We now consider to use spectral method to solve the ADE constraint in (\ref{pdeopt}). Subsituting  (\ref{expand3d}) and (\ref{pdespec}) into the ADE constraint (for space consideration, we move the diffusion term to the left hand side, and move the advection term to the right hand side), we have 
\begin{equation}\label{adedisc}
  \begin{split}
    &\sum_{ij}\frac{da_{ij}(t)}{dt}\sin(i\pi x)\sin(j\pi y)+\kappa\pi^2\sum_{ij}a_{ij}(t)(i^2+j^2)\sin(i\pi x)\sin(j\pi y)\\
=&-\sum_{k\ell}\alpha_{k\ell}(t)\sin(k\pi x)\cos(\ell \pi y)\sum_{ij}a_{ij}(t)(i\pi)\cos(i\pi x)\sin(j\pi y)\\
&-\sum_{k\ell}\beta_{k\ell}(t)\cos(k\pi x)\sin(\ell \pi y)\sum_{ij}a_{ij}(t)(j\pi)\sin(i\pi x)\cos(j\pi y).
  \end{split}
\end{equation}
Multiplying the test function $\sin(m\pi x)\sin(n\pi y)$ into the both hand sides of (\ref{adedisc}) and then integrating from 0 to 1, we have 
\begin{equation}\label{odesysm}
  \frac{da_{mn}(t)}{dt}+\kappa\pi^2a_{mn}(t)(m^2+n^2)=-4\pi\sum_{k\ell ij}(iA_{k\ell ij}^{mn}\alpha_{k\ell}(t)+jB_{k\ell ij}^{mn}\beta_{k\ell})a_{ij}(t),
\end{equation}
where 
\begin{equation}\label{avalue}
  A_{k\ell ij}^{mn}=\int_0^1\sin(m\pi x)\sin(k\pi x)\cos(i\pi x)dx\int_0^1\sin(n\pi y)\cos(\ell \pi y)\sin(j\pi y)dy,
\end{equation}
\begin{equation}\label{bvalue}
  B_{k\ell ij}^{mn}=\int_0^1\sin(m\pi x)\cos(k\pi x)\sin(i\pi x)dx\int_0^1\sin(n\pi y)\sin(\ell \pi y)\cos(j\pi y)dy.
\end{equation}
According to (\ref{addequ2}), we can simplify (\ref{odesysm}) as
\begin{equation}\label{adesimplespc}
  \frac{da_{mn}(t)}{dt}+\kappa\pi^2a_{mn}(t)(m^2+n^2)=-4\pi\sum_{ij}\big(\sum_{k\ell}(iA_{k\ell ij}^{mn}-j\frac{k}{\ell}B_{k\ell ij}^{mn})\alpha_{k\ell}(t)\big)a_{ij}(t)
\end{equation}
The values of $A_{k\ell ij}^{mn}$ and $B_{k\ell ij}^{mn}$ are easy to compute before solving the ODE system (\ref{odesysm}). For the sake of simplicity, we denote (\ref{odesysm}) as follows:
\begin{equation}\label{simpleode}
  \frac{da_{mn}(t)}{dt}-Q_{mn}\textbf{a}=0 \text{ for all } m, n= 1,2,...,N,
\end{equation}
where $Q_{mn}$ is the matrix which is easily obtained according to (\ref{adedisc}).

For the second constraint in the optimization problem (\ref{pdeopt}), we can show that it is redundant by the relationship between the velocity flow and the stream function. In fact, according to (\ref{stream}), we have 
\begin{equation}
\frac{\partial v_1}{\partial x}+\frac{\partial v_2}{\partial y}=-\frac{\partial^2\psi(x,y)}{\partial x\partial y}+\frac{\partial^2\psi(x,y)}{\partial x\partial y}=0.
\end{equation}

We then analyze the third constraint in the optimization problem (\ref{pdeopt}). We have
\begin{equation}\label{normsin}
\begin{split}
\|\textbf{v}\|_{L^2}^2&=\int_0^1\int_0^1(v_1^2+v_2^2)dxdy\\
&=\sum\limits_{k,\ell=1}^M\alpha_{k\ell}^2\int_0^1\int_0^1\sin^2(k\pi x)\cos^2(\ell\pi y)+\beta_{k\ell}^2\cos^2(k\pi x)\sin^2(\ell\pi y)dxdy\\
%&=\sum_{k,\ell}\alpha_{k\ell}^2\tau_k\sigma_\ell+\beta_{k\ell}^2\sigma_k\tau_i\\
&=\frac{1}{4}\sum\limits_{k,\ell=1}^M\alpha_{k\ell}^2+\beta_{k\ell}^2=1.
\end{split}
\end{equation}
Thus, according to (\ref{addequ2}), the third constraint in (\ref{pdeopt}) yields
\begin{equation}\label{simplenorm}
\sum_{k,\ell=1}^M\left(1+\frac{k^2}{\ell^2}\right)\alpha_{k\ell}^2(t)=4.
\end{equation}

We now analyze the case if the third constriant in the optimization problem is the second case of (\ref{vflowcase}). Recall that 
\begin{equation}\label{normsin2}
\begin{split}
  \|\nabla \textbf{v}\|_{L^2}^2&=\int_0^1\int_0^1\left(\frac{\partial v_1}{\partial x}\right)^2+\left(\frac{\partial v_1}{\partial y}\right)^2+\left(\frac{\partial v_2}{\partial x}\right)^2+\left(\frac{\partial v_2}{\partial y}\right)^2dxdy\\
&=\frac{\pi^2}{4}\sum_{k,\ell=1}^M(k^2+\ell^2)(\alpha_{k\ell}^2+\beta_{k\ell}^2)=1.
\end{split}
\end{equation}

According to (\ref{objsimple}), (\ref{adesimplespc}),and (\ref{simplenorm}), the current optimization model by using spectral method is given as follows:
\begin{equation}\label{optdse}
  \begin{array}{rl}
    \min & \frac{\pi^2}{4}\sum_{i,j=1}^Na_{ij}^2(t_{final})(i^2+j^2)\\
\text{s.t.}&\frac{da_{mn}(t)}{dt}+\kappa\pi^2a_{mn}(t)(m^2+n^2)\\
&~~~~~~~~~=-4\pi\sum_{ij}\big(\sum_{k\ell}(iA_{k\ell ij}^{mn}-j\frac{k}{\ell}B_{k\ell ij}^{mn})\alpha_{k\ell}(t)\big)a_{ij}(t)\text{ for all } m,n\\
&\sum_{k,\ell=1}^M\big(1+\frac{k^2}{\ell^2}\big)\alpha_{k\ell}^2(t)=4\\
  \end{array}
\end{equation}
We now rewrite (\ref{optdse}) in the following bilinear form
\begin{equation}\label{optmatrix}
  \begin{array}{rl}
    \min & \textbf{a}(t_{final})^TQ\textbf{a}(t_{final})\\
    \text{s.t.} &\frac{d\textbf{a}(t)}{dt}=-D\textbf{a}(t)-(I\otimes\alpha(t)^T)R(e\otimes\textbf{a}(t))\\
                &\alpha(t)^TZ\alpha(t)=4
  \end{array}
\end{equation}
where 
\begin{equation}\label{parmt}
\begin{array}{rll}
\textbf{a}(t)&=(a_{11}(t),...,a_{1N}(t),...,a_{N1}(t),...,a_{NN}(t))^T&\in \mathbb{R}^{N^2\times 1}\\
\alpha(t)&=(\alpha_{11}(t),...,\alpha_{1M}(t),...,\alpha_{M1}(t),...,\alpha_{MM}(t))^T&\in \mathbb{R}^{M^2\times 1}\\
 Q&=\frac{\pi}{4}\text{diag}\{i^2+j^2\}_{i=1,...,N,j=1,...,N} & Q\in \mathbb{R}^{N^2\times N^2}\\
D&= \kappa \pi^2 \text{diag}\{m^2+n^2\}_{m=1,...,N,n=1,...,N} &D\in \mathbb{R}^{N^2\times N^2}\\ 
I&=\text{diag}\{1,1,...,1\} & I \in \mathbb{R}^{N^2\times N^2}\\
e&=[1,1,...,1]^T & e\in \mathbb{R}^{N^2\times1}\\
R&=\text{diag}\{C^{mn}\}_{m=1,...,N,n=1,...,N}& R\in \mathbb{R}^{M^2N^2\times N^4}\\
C^{mn}&=[4\pi(iA_{k\ell ij}^{mn}-j\frac{k}{\ell}B_{k\ell ij}^{mn})]_{k,\ell=1,2,...,M, i,j=1,2,...,N} &C^{mn}\in\mathbb{R}^{M^2\times N^2} \\
Z&= \text{diag}\{1+\frac{k^2}{\ell^2}\}_{k=1,...,M,\ell=1,...,M} &Z\in \mathbb{R}^{M^2\times M^2}
\end{array}
\end{equation}
The  formulate corresponding to the advection part in (\ref{parmt}) is  complicated. We can consider it in this way. First, note that the indices $(i,j),(k,\ell),$ and $(m,n)$ are due to the ADE is defined on 2 diamensional space, we can combine them. Then the first constraint in (\ref{optdse}) yields (here we ignore the diffusion part for simplicity.)
\begin{equation}\label{simpode2}
  \frac{da_m(t)}{dt}=-4\pi\sum_{i}\sum_kC_{ki}^m\alpha_k(t)a_i(t)=-4\pi \alpha(t)C^m\textbf{a}(t), m=1,2,...,N^2,
\end{equation}
where $\alpha(t),\textbf{a}(t),C_{ki}^m,$ and $C^m$ are given as the same manner as (\ref{parmt}). Writing all (\ref{simpode2}) in a matrix form, we have 
\begin{equation}
  \frac{d\textbf{a}(t)}{dt}=-4\pi (I\otimes \alpha(t))\text{diag}\{C^m\}(e\otimes \textbf{a}(t)),
\end{equation}
where $I$ and $e$ are in appropriate dimension.

\section{Theoretical Analysis}
We now give some theoretical properties of the constriants. First, let us consider the (\ref{odesysm}) and (\ref{normsin}). Denote
\begin{equation}\label{fgnote}
  F_{ij}^{mn}(t)=\sum_{k,\ell}iA_{k\ell ij}^{mn}\alpha_{k\ell}(t), ~~G_{ij}^{mn}(t)=\sum_{k,\ell}jB_{k\ell ij}^{mn}\beta_{k\ell}(t),
\end{equation}
then, according to Cauchy's inequality,  we have 
\begin{equation}
    (F_{ij}^{mn}(t))^2\leq i^2\big(\sum_{k,\ell}(A_{k\ell ij}^{mn})^2\big)\big(\sum_{k,\ell}\alpha_{k\ell}^2\big),~~
    (G_{ij}^{mn}(t))^2\leq j^2\big(\sum_{k,\ell}(B_{k\ell ij}^{mn})^2\big)\big(\sum_{k,\ell}\beta_{k\ell}^2\big),
\end{equation}
which, according to (\ref{normsin}), yields
\begin{equation}\label{coeffer}
\begin{split}
  (F_{ij}^{mn}(t)+G_{ij}^{mn}(t))^2&\leq 2(F_{ij}^{mn}(t))^2+2(G_{ij}^{mn}(t))^2\\
&\leq 2\max\big\{i^2\sum_{k,\ell}(A_{k\ell ij}^{mn})^2, j^2\sum_{k,\ell}(B_{k\ell ij}^{mn})^2\big\}\big(\sum_{k,\ell}\alpha_{k\ell}^2+\sum_{k,\ell}\beta_{k\ell}^2\big)\\
&\leq 8\max\big\{i^2\sum_{k,\ell}(A_{k\ell ij}^{mn})^2, j^2\sum_{k,\ell}(B_{k\ell ij}^{mn})^2\big\}.
\end{split}
\end{equation}
Denote 
\begin{equation}
K_{ij}^{mn}=\max\big\{i\big(\sum_{k,\ell}(A_{k\ell ij}^{mn})^2\big)^{1/2}, j\big(\sum_{k,\ell}(B_{k\ell ij}^{mn})^2\big)^{1/2}\big\},
\end{equation} 
then, according to (\ref{coeffer}), we have 
\begin{equation}
  -2\sqrt{2}K_{ij}^{mn}\leq F_{ij}^{mn}(t)+G_{ij}^{mn}(t)\leq 2\sqrt{2}K_{ij}^{mn}.
\end{equation}
Note that $F_{ij}^{mn}(t)+G_{ij}^{mn}(t)$ is the coefficient of $a_{ij}(t)$ in (\ref{odesysm}). Thus if we write (\ref{odesysm}) with all $m,n$ together in the form of 
\begin{equation}\label{odeform}
\frac{d\textbf{a}}{dt}=-A(t)\textbf{a} -D\textbf{a},  
\end{equation}
where $A(t)$ is the matrix corresponding to advection part, and $D$ is the matrix corresponding to diffusion part\footnote{The advection part $A(t)$ depends on $t$ since it involves $\alpha(t)$ and $\beta(t)$, while the diffusion part $D$ is constant.}. Denote $K=\max_{ijmn}\{K_{ij}^{mn}\},$ then we have that 
\begin{equation}
  -8\sqrt{2}\pi K\leq A_{ij} \leq  8\sqrt{2}\pi K,
\end{equation}
which means the matrix $A(t)$ corresponding to advectin part is a bounded matrix which is independent on $t$. According to (\ref{avalue}) and (\ref{bvalue}), we can compute that most of $A_{k\ell ij}$ and $B_{k\ell ij}$ are equal to zero, thus, it is easy to show that there exists a constant $\gamma>0$, such that
\begin{equation}\label{equde}
  0<K\leq \gamma N,
\end{equation}
where $N$ is the number of modes used in the spectral method. \\

We now consider (\ref{odesysm}) and (\ref{normsin2}), and we still use the notation in (\ref{fgnote}). According to Cauchy's inequality, we have 
\begin{equation}
\begin{split}
    (F_{ij}^{mn}(t))^2&\leq i^2\big(\sum_{k,\ell}(k^2+\ell^2)^{-1}(A_{k\ell ij}^{mn})^2\big)\big(\sum_{k,\ell}(k^2+\ell^2)\alpha_{k\ell}^2\big),\\
    (G_{ij}^{mn}(t))^2&\leq j^2\big(\sum_{k,\ell}(k^2+\ell^2)^{-1}(B_{k\ell ij}^{mn})^2\big)\big(\sum_{k,\ell}(k^2+\ell^2)\beta_{k\ell}^2\big),
\end{split}
\end{equation}
which, according to (\ref{normsin2}), yields
\begin{equation}\label{coeffe2r}
\begin{split}
  (F_{ij}^{mn}(t)+G_{ij}^{mn}(t))^2&\leq 2(F_{ij}^{mn}(t))^2+2(G_{ij}^{mn}(t))^2\\
&\leq \frac{8}{\pi^2}\max\big\{i^2\sum_{k,\ell}(k^2+\ell^2)^{-1}(A_{k\ell ij}^{mn})^2, j^2\sum_{k,\ell}(k^2+\ell^2)^{-1}(B_{k\ell ij}^{mn})^2\big\}.
\end{split}
\end{equation}
Denote 
\begin{equation}\label{hatKg}
\hat K_{ij}^{mn}=\max\big\{i\big(\sum_{k,\ell}(k^2+\ell^2)^{-1}(A_{k\ell ij}^{mn})^2\big)^{1/2}, j\big(\sum_{k,\ell}(k^2+\ell^2)^{-1}(B_{k\ell ij}^{mn})^2\big)^{1/2}\big\},
\end{equation} 
then, according to (\ref{coeffe2r}), we have 
\begin{equation}
  -2\sqrt{2}\hat K_{ij}^{mn}\leq F_{ij}^{mn}(t)+G_{ij}^{mn}(t)\leq 2\sqrt{2}\hat K_{ij}^{mn}.
\end{equation}
If the ODE system is also given as in (\ref{odeform}), and denote $\hat K=\max_{ijmn}\{\hat K_{ij}^{mn}\},$ then we have 
\begin{equation}
  -8\sqrt{2}\pi \hat K\leq A_{ij} \leq  8\sqrt{2}\pi \hat K.
\end{equation}
It is easy to show that there exists a constant $\hat \gamma>0$, such that 
\begin{equation}\label{equde1}
  0<\hat K\leq\hat\gamma.
\end{equation}
We note that there is no $N$ in (\ref{equde1}) which is different from the case in (\ref{equde}). This is  $\hat K_{ij}^{mn}$ defined as in (\ref{hatKg}) has $k^2+\ell^2$ in each term.

\begin{example}
Let the velocity vector in (\ref{ade}) be given as $\textbf{v}=(\sin(2\pi y),0)^T$, and the boundary and initial conditions be given as follows:
\begin{equation}
\begin{array}{rl}
\frac{\partial \phi}{\partial t}+\sin(2\pi y)\frac{\partial \phi}{\partial x}&=\kappa (\frac{\partial^2 \phi}{\partial x^2}+\frac{\partial^2 \phi}{\partial y^2})\\
\phi(x,y,t)&=0 ~~~~~~~\text{ for } (x,y)\in \partial [0,1]\times\partial[0,1]\\
\phi(x,y,0)&=f(x,y) ~~~\text{ for } (x,y)\in [0,1]\times[0,1].\\
\end{array}
\end{equation}

\end{example}

Let us consider the basis functions $\{\psi_{m,n}(x,y)=\sin(m\pi x)\sin(n\pi y)\}_{m,n=-\infty}^\infty.$
Then we have the following properties:
\begin{equation}
\int_{0}^1\sin^2(m\pi x)dx=
\begin{cases} 
1 &\mbox{if } m = 0 \\ 
 \tfrac{1}{2} & \mbox{if } m \geq1. 
 \end{cases}
\end{equation}
\begin{equation}
\psi_{m,n}(x,y)=0~~~\text{ for } (x,y) \in \partial [0,1] \times \partial [0,1].
\end{equation}

Let $\phi(x,y,t)$ have the following spectral represenation
\begin{equation}\label{edc3}
\phi(x,y,t)=\sum_{m=-\infty}^\infty\sum_{n=-\infty}^\infty\hat a_{m,n}(t)\psi_{m,n}(x,y),
\end{equation}
then
\begin{equation}
\hat a_{m,n}(t)=\frac{1}{\sigma_m\sigma_n}\int_{0}^1\phi(x,y,t)\psi_{m,n}(x,y)dxdy,
\end{equation}
where $\sigma_m=1$ for $m=0$ and $\sigma_m=\tfrac{1}{2}$ for $m\geq1.$

Subsituting (\ref{edc3}) into ADE we have 
\begin{equation}\label{cdes3}
\begin{split}
\sum_{m,n=0}^\infty\frac{d\hat a_{m,n}}{dt}\psi_{m,n}+\sin(2\pi y)\sum_{m,n=0}^\infty
\hat a_{m,n}\frac{\partial \psi_{m,n}}{\partial x}&=
\kappa\sum_{m,n=0}^\infty\hat a_{m,n}(\frac{\partial^2\psi_{m,n}}{\partial x^2}+\frac{\partial^2\psi_{m,n}}{\partial y^2})\\
\end{split}
\end{equation} 
Note that 
\begin{equation}
  \frac{\partial \psi_{m,n}}{\partial x}=m\pi\cos(m\pi x)\sin(n\pi y)
\end{equation}
\begin{equation}
  \frac{\partial^2 \psi_{m,n}}{\partial x^2}=-m^2\pi^2\psi_{m,n},~~~\frac{\partial^2 \psi_{m,n}}{\partial y^2}=-n^2\pi^2\psi_{m,n}.
\end{equation}
\begin{equation}
  \int_{0}^1 \psi_{k,\ell}\psi_{m,n}dxdy=\delta_{k,m}\delta_{\ell,n}\sigma_k\sigma_\ell.
\end{equation}

By multilying $\psi_{k,\ell}$ into both sides of (\ref{cdes3}), and taking the integral from 0 to 1, we have
\begin{equation}
\begin{split}
 & \frac{d\hat a_{k,\ell}}{dt}+\sum_{m,n=0}^\infty
\hat a_{m,n}\frac{m\pi}{\sigma_k\sigma_\ell}\int_{0}^1\sin(k\pi x)\cos(m\pi x)dx\int_{0}^1\sin(\ell\pi y)\sin(2\pi y)\sin(n\pi y)dy\\
=~~&
-\kappa (k^2+\ell^2)\pi^2\hat a_{k,\ell}
\end{split}
\end{equation}
By using Mathematica, we have 
\begin{equation}
  \int_{0}^1\sin(k\pi x)\cos(m\pi x)dx=
\begin{cases} 
\tfrac{k-k\cos(m\pi)\cos(k\pi)}{\pi(k^2-m^2)} &\mbox{if } k\neq m \\ 
 0 & \mbox{if } k=m. 
 \end{cases}
\end{equation}
\begin{equation}
  \int_{0}^1\sin(\ell\pi y)\sin(2\pi y)\sin(n\pi y)dy=
\begin{cases} 
\tfrac{4\ell n(-1+\cos(\ell\pi)\cos(n\pi))}{(-2+\ell-n)(2+\ell-n)(-2+\ell+n)(2+\ell+n)\pi} &\mbox{if } ok \\ 
 0 & \mbox{if } not ok. 
 \end{cases}
\end{equation}
The initial condition of the ODE is given as follows:
\begin{equation}
  \hat a_{k,\ell}(0)=\frac{1}{\sigma_k\sigma_\ell}\int_{0}^1f(x,y)\psi_{k,\ell}dx dy
\end{equation}

Thus, the ODE system is summarized as follows:
\begin{equation}
  \begin{split}
     \frac{d\hat{\textbf{a}}}{dt}&=(-A+D)\textbf{a}\\
       \hat{\textbf{a}}(0)&=\frac{1}{\sigma}\int_{0}^1f(x,y)\psi dxdy.\\
  \end{split}
\end{equation}

In that paper \cite{liuwj1}, the example is given as follows:
The domain is $[0,1]\times[0,1]$ and $\kappa=0.001$. The velocity flow is given as follows:
\begin{equation}
  v_1(x,y,t)=
  \begin{cases}
    \sin(\pi x)\cos(\pi y) & \text{ if } n\leq t< n+0.75 \\
-\sin(2\pi x)\cos(\pi y)& \text{ if } n+0.75\leq t< n+1
  \end{cases}
 \end{equation}
\begin{equation}
  v_2(x,y,t)=
  \begin{cases}
    -\cos(\pi x)\sin(\pi y) & \text{ if } n\leq t < n+0.75\\
2\cos(2\pi x)\sin(\pi y) & \text{ if } n+0.75\leq t< n+1
  \end{cases}
\end{equation}
The initial condition of the PDE is given as follows:
\begin{equation}
  \phi(x,y,0)=
  \begin{cases}
    1&\text{ if } 0\leq x\leq \tfrac{1}{2} \text{ and } 0\leq y\leq 1\\
   0 &\text{ if } \tfrac{1}{2}<x\leq 1 \text{ and } 0\leq y\leq 1
  \end{cases}
\end{equation}
The error is cacluated as follows:
\begin{equation}
  V(t)=\|\phi(x,y,t)-<\phi(x,y,0)>\|^2,
\end{equation}
where $<\phi>$ is the average of $\phi$. 

Again, we use the following spectral decomposition:
\begin{equation}
  \phi(x,y,t)=\sum_{i=1}^\infty\sum_{j=1}^\infty a_{ij}(t)\cos(i\pi x)\cos(j\pi y).
\end{equation}
First, let us calculate $<\phi(x,y,0)>$. We have
\begin{equation}
  \begin{split}
    <\phi(x,y,0)>&=\int_{0}^1\int_{0}^1\sum_{i=1}^\infty\sum_{j=1}^\infty a_{ij}(0)\cos(i\pi x)\cos(j\pi y)dxdy\\
&=\sum_{i=1}^\infty\sum_{j=1}^\infty a_{ij}(0)\int_{0}^1\int_{0}^1\cos(i\pi x)\cos(j\pi y)dxdy\\
&=a_{00}(0).
  \end{split}
\end{equation}

Next, let us calculate the initial condition of ODE. We have
\begin{equation}
\begin{split}
  a_{mn}(0)&=\frac{1}{\sigma_m\sigma_n}  \int_{0}^1\int_{0}^1\phi(x,y,0)\cos(m\pi x)\cos(n\pi y)dxdy\\
&=\frac{1}{\sigma_m\sigma_n}  \int_{0}^{\tfrac{1}{2}}\cos(m\pi x)dx\int_0^1 \cos(n\pi y)dy\\
\end{split}
\end{equation}
Thus,
\begin{equation}
 a_{mn}(0)= \begin{cases}
\frac{1}{\sigma_m}\frac{\sin(\frac{m\pi}{2})}{m\pi}=\frac{2}{m\pi}& \text{ if } m=4k+1, n=0\\
\frac{1}{\sigma_m}\frac{\sin(\frac{m\pi}{2})}{m\pi}=-\frac{2}{m\pi}& \text{ if } m=4k+3, n=0\\
\frac{1}{2}& \text{ if } m=0, n=0\\
0 & \text{ otherwise }      
  \end{cases}
\end{equation}

\textbf{First Part:}
Now, let us derive the ODE. We have
\begin{equation*}
  \begin{split}
    \sum_{i=1}^\infty\sum_{j=1}^\infty \frac{da_{ij}(t)}{dt}\cos(i\pi x)\cos(j\pi y)&+\sin(\pi x)\cos(\pi y)\sum_{i=1}^\infty\sum_{j=1}^\infty a_{ij}(t)(-i\pi)\sin(i\pi x)\cos(j\pi y)\\
&-\cos(\pi x)\sin(\pi y)\sum_{i=1}^\infty\sum_{j=1}^\infty a_{ij}(t)(-j\pi)\cos(i\pi x)\sin(j\pi y)\\
=&-\kappa\pi^2\sum_{i=1}^\infty\sum_{j=1}^\infty a_{ij}(t)(i^2+j^2)\cos(i\pi x)\cos(j\pi y)
  \end{split}
\end{equation*}
i.e.,
\begin{equation*}
  \begin{split}
    \sum_{i=1}^\infty\sum_{j=1}^\infty \frac{da_{ij}(t)}{dt}\cos(i\pi x)\cos(j\pi y)&+\sum_{i=1}^\infty\sum_{j=1}^\infty a_{ij}(t)(-i\pi)\sin(\pi x)\sin(i\pi x)\cos(\pi y)\cos(j\pi y)\\
&+\sum_{i=1}^\infty\sum_{j=1}^\infty a_{ij}(t)(j\pi)\cos(\pi x)\cos(i\pi x)\sin(\pi y)\sin(j\pi y)\\
=&-\kappa\pi^2\sum_{i=1}^\infty\sum_{j=1}^\infty a_{ij}(t)(i^2+j^2)\cos(i\pi x)\cos(j\pi y)
  \end{split}
\end{equation*}
Using test function $\cos(m\pi x)\cos(n\pi y)$, we have 
\begin{equation}
  \frac{da_{mn}(t)}{dt}+\sum_{i=1}^\infty\sum_{j=1}^\infty a_{ij}(t)\frac{\pi}{\sigma_m\sigma_n}[-iA_iB_j+jC_iD_j]=-\kappa\pi^2(m^2+n^2)a_{mn}(t)
\end{equation}
where 
\begin{equation}
  \begin{split}
    A_i&=\int_0^1\cos(m\pi x)\sin(\pi x)\sin(i\pi x)dx~~B_j=\int_0^1\cos(n\pi y)\cos(\pi y)\cos(j\pi y)dy\\
    C_i&=\int_0^1\cos(m\pi x)\cos(\pi x)\cos(i\pi x)dx~~D_j=\int_0^1\cos(n\pi y)\sin(\pi y)\sin(j\pi y)dy
  \end{split}
\end{equation}

One can find that $A=D, B=C$ if $m=n, i=j.$

If $m=0$, then 
\begin{equation}
  A_i=
  \begin{cases}
    \frac{1}{2}& \text{ if } i=1\\
0 &\text{ otherwise }
  \end{cases}
~~~~
  C_i=
  \begin{cases}
    \frac{1}{2}& \text{ if } i=1\\
0 &\text{ otherwise }
  \end{cases}
\end{equation}

If $n=0$, then
\begin{equation}
  B_j=
  \begin{cases}
    \frac{1}{2}& \text{ if } j=1\\
0 &\text{ otherwise }
  \end{cases}
~~~~
  D_j=
  \begin{cases}
    \frac{1}{2}& \text{ if } j=1\\
0 &\text{ otherwise }
  \end{cases}
\end{equation}

If $m\neq 0, n\neq 0$, then 
\begin{equation}
  A_i=
  \begin{cases}
    \frac{1}{4}& \text{ if } i=m+1\\
-\frac{1}{4}&\text{ if } i= m-1 \text{ and }i>0\\
\frac{1}{4}&\text{ if } i = 1-m \text{ and }i>0\\
0 &\text{ otherwise }
  \end{cases}
~~~~
  B_j=
  \begin{cases}
    \frac{1}{4}& \text{ if } j=n\pm1, 1-n\text{ and }j>0\\
\frac{1}{2} & \text{ if } j =0 \text{ and } n=1\\
0 &\text{ otherwise }
  \end{cases}
\end{equation}

\begin{equation}
  C_i=
  \begin{cases}
    \frac{1}{4}& \text{ if } i=m\pm1, 1-m \text{ and } i>0\\
\frac{1}{2}&\text{ if } i =0 \text{ and } m=1\\
0 &\text{ otherwise }
  \end{cases}
~~~~
  D_j=
  \begin{cases}
    \frac{1}{4}& \text{ if } j=n+1\\
-\frac{1}{4}&\text{ if } j = n-1 \text{ and } j>0\\
\frac{1}{4}&\text{ if } j = 1-n \text{ and } j>0\\
0 &\text{ otherwise }
  \end{cases}
\end{equation}

\textbf{Second Part:} Now let us derive the ODE for the switching part, and let us derive the ODE. We have
\begin{equation*}
  \begin{split}
    \sum_{i=1}^\infty\sum_{j=1}^\infty \frac{da_{ij}(t)}{dt}\cos(i\pi x)\cos(j\pi y)&-\sin(2\pi x)\cos(\pi y)\sum_{i=1}^\infty\sum_{j=1}^\infty a_{ij}(t)(-i\pi)\sin(i\pi x)\cos(j\pi y)\\
&+2\cos(2\pi x)\sin(\pi y)\sum_{i=1}^\infty\sum_{j=1}^\infty a_{ij}(t)(-j\pi)\cos(i\pi x)\sin(j\pi y)\\
=&-\kappa\pi^2\sum_{i=1}^\infty\sum_{j=1}^\infty a_{ij}(t)(i^2+j^2)\cos(i\pi x)\cos(j\pi y)
  \end{split}
\end{equation*}
i.e.,
\begin{equation*}
  \begin{split}
    \sum_{i=1}^\infty\sum_{j=1}^\infty \frac{da_{ij}(t)}{dt}\cos(i\pi x)\cos(j\pi y)&+\sum_{i=1}^\infty\sum_{j=1}^\infty a_{ij}(t)(i\pi)\sin(2\pi x)\sin(i\pi x)\cos(\pi y)\cos(j\pi y)\\
&+\sum_{i=1}^\infty\sum_{j=1}^\infty a_{ij}(t)(-2j\pi)\cos(2\pi x)\cos(i\pi x)\sin(\pi y)\sin(j\pi y)\\
=&-\kappa\pi^2\sum_{i=1}^\infty\sum_{j=1}^\infty a_{ij}(t)(i^2+j^2)\cos(i\pi x)\cos(j\pi y)
  \end{split}
\end{equation*}
Using test function $\cos(m\pi x)\cos(n\pi y)$, we have 
\begin{equation}
  \frac{da_{mn}(t)}{dt}+\sum_{i=1}^\infty\sum_{j=1}^\infty a_{ij}(t)\frac{\pi}{\sigma_m\sigma_n}[i\tilde{A}_i\tilde{B}_j-2j\tilde{C}_i\tilde{D}_j]=-\kappa\pi^2(m^2+n^2)a_{mn}(t)
\end{equation}
where 
\begin{equation*}
  \begin{split}
    \tilde{A}_i&=\int_0^1\cos(m\pi x)\sin(2\pi x)\sin(i\pi x)dx\\
    \tilde{B}_j&=\int_0^1\cos(n\pi y)\cos(\pi y)\cos(j\pi y)dy\\
    \tilde{C}_i&=\int_0^1\cos(m\pi x)\cos(2\pi x)\cos(i\pi x)dx\\
    \tilde{D}_j&=\int_0^1\cos(n\pi y)\sin(\pi y)\sin(j\pi y)dy
  \end{split}
\end{equation*}

If $m=0$, then 
\begin{equation*}
  \tilde{A}_i=
  \begin{cases}
    \frac{1}{2}& \text{ if } i=2\\
0 &\text{ otherwise }
  \end{cases}
~~~~
  \tilde{C}_i=
  \begin{cases}
    \frac{1}{2}& \text{ if } i=2\\
0 &\text{ otherwise }
  \end{cases}
\end{equation*}

If $n=0$, then
\begin{equation*}
  \tilde{B}_j=
  \begin{cases}
    \frac{1}{2}& \text{ if } j=1\\
0 &\text{ otherwise }
  \end{cases}
~~~~
  \tilde{D}_j=
  \begin{cases}
    \frac{1}{2}& \text{ if } j=1\\
0 &\text{ otherwise }
  \end{cases}
\end{equation*}

If $m\neq 0, n\neq 0$, then 
\begin{equation*}
  \tilde{A}_i=
  \begin{cases}
    \frac{1}{4}& \text{ if } i=m+2\\
-\frac{1}{4}&\text{ if } i=m-2 \text{ and } i>0\\
\frac{1}{4}&\text{ if } i = 2-m \text{ and } i>0\\
0 &\text{ otherwise }
  \end{cases}
~~~~
  \tilde{B}_j=
  \begin{cases}
    \frac{1}{4}& \text{ if } j=n\pm1, 1-n \text{ and } j>0\\
\frac{1}{2} & \text{ if } j=0 \text{ and } n=1\\
0 &\text{ otherwise }
  \end{cases}
\end{equation*}

\begin{equation*}
  \tilde{C}_i=
  \begin{cases}
    \frac{1}{4}& \text{ if } i=m\pm2, 2-m \text{ and } i>0\\
\frac{1}{2} & \text{ if } i=0 \text{ and } m=2\\
0 &\text{ otherwise }
  \end{cases}
~~~~
  \tilde{D}_j=
  \begin{cases}
    \frac{1}{4}& \text{ if } j=n+1\\
-\frac{1}{4}&\text{ if } j = n-1 \text{ and }j>0\\
\frac{1}{4}&\text{ if } j = 1-n\text{ and } j>0\\
0 &\text{ otherwise }
  \end{cases}
\end{equation*}

\section{Conclusion}
The advection diffusion equation is a combination of the advection and diffusion equations, and describes physical phenomena where particles, energy, or other physical quantities are transferred inside a physical system due to two processes: advection and diffusion. In this paper, motivated by the problem of microfluidic heat transfer, we propose a PDE-based optimization to identify the optimal incompressible velocity fields in 2D domain. We applied the discretization-then-optimization method. In particular, we use spectral method to discretize the model to obtain an ODE based optimization to solve this optimization model.  
Spectral methods are a class of techniques used in applied mathematics  to numerically solve certain partial differential equations. The idea is to expand the solution of the differential equation as a sum of certain basis functions in infinite terms and then to choose the coefficients in the sum in order to approximate the differential equation as close as possible.
The novelty of this paper is that this way significantly reduces the complexity of the discretization optimization model, and gives a more accurate approximation of the original PDE based optimization. Some theoretical results are obtained about the boundness of the model coefficients. Future research will be focused on the invariant set of the incompressible velocity.

\bibliographystyle{plain}
\bibliography{myref}

\end{document}